\numberwithin{equation}{section}
\newcommand{\ie}{{\em i.e. }}
\newcommand{\cf}{{\em cf.}\ }
\newtheorem{theorem}{Theorem}[subsection]
\newtheorem{lemma}[theorem]{Lemma}
\newtheorem{proposition}[theorem]{Proposition}
\newtheorem{observation}[theorem]{Observation}
\newtheorem{lemma and definition}[theorem]{Lemma and Definition}
\theoremstyle{definition}
\newtheorem{notation}[theorem]{Notation}
\newtheorem{remark}[theorem]{Remark}
\newtheorem{definition}[theorem]{Definition}
\newcommand{\Z}{\mathbb{Z}}
\newcommand{\T}{\mathbb{T}}
\newcommand{\Q}{\mathbb{Q}}
\newcommand{\Qext}{\mathbb{Q}_{\infty}}
\renewcommand{\P}{\mathbb{P}}
\newcommand{\px}{\phantom{x}}
\newcommand{\ca}{{\mathcal A}}
\newcommand{\cF}{{\mathcal F}}
\begin{document}
\title{ON THE C-VECTORS and G-VECTORS OF THE MARKOV CLUSTER ALGEBRA}
\author{ALFREDO N\'AJERA CH\'AVEZ}
\maketitle

\begin{abstract}
We describe the $\mathbf{c}$-vectors and $\mathbf{g}$-vectors of the Markov cluster algebra in terms of a special family of triples of rational numbers, which we call the Farey triples.
\end{abstract}

\section*{Introduction}
In papers such as \cite{Clusters 3, QP 1, Labardini 1, Labardini 2, Plamondon} (to mention just a few), the authors present (as examples and propositions) interesting properties of the cluster algebra arising from the quiver of Figure 1, as well as of the potentials, Jacobian algebras and cluster categories associated to it.
\vspace{-1mm}
\begin{figure}[htbp]
\label{Double}
\begin{equation*}
\xymatrix{
 & \bullet \ar@<-0.5ex>[ld] \ar@<0.5ex>[ld] &  \\
\bullet \ar@<-0.5ex>[rr] \ar@<0.5ex>[rr] & & \bullet \ar@<-0.5ex>[lu] \ar@<0.5ex>[lu]
}
\vspace{-1mm}
\end{equation*}
\caption{Double cyclic triangle}
\vspace{-1.5mm}
\end{figure}

The cluster algebra arising from this quiver is frequently called the \emph{Markov cluster algebra}, in the theory of cluster algebras associated to triangulated surfaces (see \cite{FST}), it  can be regarded as the cluster algebra arising from a torus with one puncture. When this algebra has principal coefficients we denote it by $\ca_M$. In this paper, we describe the (extended) exchange matrices of $\ca_M$ and thus the set of its $\mathbf{c}$-vectors, which can be considered as a generalization of a root system. In \cite{Nakanishi} and \cite[Theorem 1.2]{Nakanishi Zelevinsky}, the authors show how the $\mathbf{c}$-vectors of a cluster algebra associated to a quiver are related to its $\mathbf{g}$-vectors. Thus, in our case, we can obtain the $\mathbf{g}$-vectors. Section 1 is a reminder on cluster algebras with principal coefficients. In section 2 we introduce the Farey triples, which are triples of rational numbers satisfying a specific arithmetic condition. We define a mutation operation on Farey triples and show that the resulting \emph{exchange graph} is a $3$-regular tree $\T_3$. In Section 3 we associate to each Farey triple $T$ an exchange matrix $M_T$ of $\ca_M$, and write the entries of $M_T$ in terms of the components of $T$. 
\section*{Acknowledgments}
I would like to express my sincere thanks to my master advisor, Professor Christof Geiss Hahn, for his support during my studies and for introducing me to the theory of cluster algebras. I am grateful to my current advisor, Professor Bernhard Keller, for helpful comments on previous versions of this article.

\section{Preliminaries on cluster algebras}
\subsection{Cluster algebras with principal coefficients} In this section, we recall the construction of cluster algebras with principal coefficients from \cite{Clusters 4}. For an integer $x$, we use the notations
\begin{equation*}
[x]_+=\max(x,0)
\end{equation*}
and
\begin{equation*}
\text{sgn}(x)=
\begin{cases}
-1 & \text{if  } x<0 \\
\ \ 0 & \text{if  } x=0\\
\ \ 1 & \text{if  } x>0
\end{cases}
\end{equation*}


\begin{definition}
The \emph{tropical semifield} on a finite family of variables $u_j,\ j \in J$, is the abelian group (written multiplicatively) $\text{Trop}(u_j: j \in J)$ freely generated by the $u_j,\ j \in J$. It is endowed with an auxiliary addition~$\oplus$ defined by
\begin{equation}
\label{eq:tropical-addition}
\prod_j u_j^{a_j} \oplus \prod_j u_j^{b_j} =
\prod_j u_j^{\min (a_j, b_j)} \,.
\end{equation}
\end{definition}

\begin{remark}
From now on, we let $n$ be a positive integer and $\P$ be the tropical semifield on the indeterminates $x_{n+1}, \ldots , x_{2n}$. Notice that $\Q \P$, the group algebra on the abelian group $\P$, is naturally identified with the algebra of Laurent polynomials with rational coefficients in the variables $x_{n+1}, \ldots , x_{2n}$. We denote by $\cF$ the field of fractions of the ring of polynomials with coefficients in $\Q\P$ in $n$ indeterminates.
\end{remark}

\begin{definition}
A \emph{seed} in $\cF$ is a pair $(\tilde{B},\mathbf{x})$ formed by
\begin{itemize}
\item a $2n \times n$ integer matrix $\tilde{B}=(b_{ij})$, whose principal part $B$, \ie the top $n \times n$ submatrix, is skew-symmetric (equivalently, associated to a quiver $\Gamma$ with no oriented cycles of length $\leq 2$ via the formula $b_{ij}$=$|$number of arrows $i\rightarrow j $ in $ \Gamma$$|-|$number of arrows $j\rightarrow i$ in $\Gamma|$ for $1 \leq i,j\leq n$);
\item a free generating set $\mathbf{x}=\lbrace x_1,\ldots , x_n\rbrace $ of the field $\cF$.
\end{itemize}
The matrix $\tilde{B}$ is called the \emph{(extended) exchange matrix} and the set $\mathbf{x}$ the \emph{cluster} of the seed $(\tilde{B},\mathbf{x})$.
\end{definition}

\begin{definition}
Let $(\tilde{B},\mathbf{x})$ be a seed in $\cF$, and let $1\leq k \leq n$ be an integer. The \emph{seed mutation} $\mu_k$ in the direction $k$ transforms $(\tilde{B},\mathbf{x})$ into the seed $\mu_k(\tilde{B},\mathbf{x})=(\tilde{B}',\mathbf{x}')$ defined as follows:
\begin{itemize}
\item The entries of $\tilde{B}'=(b'_{ij})$ are given by
\begin{equation}
b'_{ij}=
\begin{cases}
-b_{ij} & \text{if  } i=k\text{ or } j=k; \\
b_{ij}+\text{sgn}(b_{ik})[b_{ik}b_{kj}]_+ & \text{otherwise}.
\end{cases}
\end{equation}
\item The cluster $\mathbf{x}'=\lbrace x'_1,\ldots, x'_n \rbrace$ is given by $x'_j=x_j$ for $j\neq k$, whereas $x'_k\in \cF$ is determined by the \emph{exchange relation}
\begin{equation}
\label{exchange relation}
x'_kx_k=\prod_{i=1}^{2n}x_i^{[b_{ik}]_+}+\prod_{i=1}^{2n}x_i^{[-b_{ik}]_+}
\end{equation}
\end{itemize}
\end{definition}
\begin{remark}
Mutation in a fixed direction is an involution, \ie $\mu_k\mu_k(\tilde{B},\mathbf{x})=(\tilde{B},\mathbf{x})$.
\end{remark}

\begin{definition}
Let	$\T_n$ be	the $n$-regular tree,	whose edges are labeled by the numbers $1, \ldots , n$ so that the $n$ edges emanating from each vertex carry different labels. A \emph{cluster pattern} is the assignment of a seed $(\tilde{B}_t,\mathbf{x}_t)$ to each vertex $t$ of $\T_n$ such that the seeds assigned to vertices $t$ and $t'$, linked by an edge with label $k$, are obtained from each other by the seed mutation $\mu_k$. Fix a vertex $t_0$ of the $n$-regular tree $\T_n$. Clearly, a cluster pattern is uniquely determined by the initial seed $(\tilde{B}_{t_0} , x_{t_0} )$, which can be chosen arbitrarily. The $\mathbf{c}$\emph{-vectors} of a seed $(\tilde{B}_t , \mathbf{x}_t )$ are elements of the set $\lbrace \mathbf{c}_{j;t}=(b_{n+1j}^{t},\ldots ,b_{2nj}^{t})\in\Z^{n}: 1\leq j\leq n \rbrace$, where $\tilde{B}_t=(b^t_{ij})$.
\end{definition}

\begin{definition}
Fix a seed  $(\tilde{B},\mathbf{x})$ and let  $(\tilde{B}_t,\mathbf{x}_t),\ t \in \T_n$, be the unique cluster pattern with initial seed  $(\tilde{B},\mathbf{x})$. The \emph{clusters} associated with  $(\tilde{B},\mathbf{x})$ are the sets $\mathbf{x}_t,\ t \in \T_n$. The \emph{cluster variables} are the elements of the clusters. 
The \emph{cluster algebra} $\ca(\tilde{B}) = \ca(\tilde{B},\mathbf{x})$ is the $\Z\P$-subalgebra of $\cF$ generated by the cluster variables. Its \emph{ring of coefficients} is $\Z\P$. We say that the cluster algebra $\ca(\tilde{B})$ has \emph{principal coefficients} at the vertex $t_0$ if the complementary part of $\tilde{B}_{t_0}$ (\ie the bottom $n\times n$ submatrix) is the $n\times n$ identity matrix.
\end{definition}




\subsection{The $\mathbf{g}$-vectors} In this section we recall the natural $\Z^{n}$-grading in a cluster algebra with principal coefficients. This leads us to the definition of the $\mathbf{g}$-vectors.
\begin{definition}
Let $\ca(\tilde{B},\mathbf{x})$ be the cluster algebra with principal coefficients at a vertex $t_0$,  defined by the initial seed $(\tilde{B},\mathbf{x})=((b_{ij}),\lbrace x_1,\ldots,x_n \rbrace)$. The \emph{(natural) $\Z^n$-grading} of the ring $\Z[x_1{\pm 1},\ldots , x_n^{\pm 1};x_{n+1},\ldots , x_{2n}]$ is given by
\begin{equation}
\label{grading}
\deg(x_j)=
\begin{cases}
\mathbf{e}_j  & \text{if } 1\leq j \leq n\\
\mathbf{-b}_j  & \text{if } n+1\leq j \leq 2n
\end{cases}
\end{equation}
where $\mathbf{e}_1,\ldots, \mathbf{e}_n$ are the standard basis (column) vector in $\Z^{n}$, and $\mathbf{b}_j=\Sigma_i b_{ij}\mathbf{e}_i$.
\end{definition}

\begin{theorem} (\cite[Corollary 6.3]{Clusters 4}) Under the $\Z^n$-grading given by (\ref{grading}), the cluster algebra $\ca(\tilde{B},\mathbf{x})$ is a $\Z^n$-graded subalgebra of $\Z[x_1{\pm 1},\ldots , x_n^{\pm 1};x_{n+1},\ldots , x_{2n}]$.
\end{theorem}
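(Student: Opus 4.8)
The plan is to deduce the grading statement from the homogeneity of a natural generating set. Since $\ca(\tilde B,\mathbf{x})$ is generated as a $\Z\P$-algebra by the cluster variables, and since the coefficients $x_{n+1},\ldots,x_{2n}$ are homogeneous by the very definition (\ref{grading}) of the grading, the whole statement reduces to a single assertion: every cluster variable $x_{\ell;t}$, $t\in\T_n$, is homogeneous. Indeed, once a generating set is homogeneous the subalgebra splits as the direct sum of its homogeneous components, which is exactly what it means to be a $\Z^n$-graded subalgebra. So the plan is to prove homogeneity of all cluster variables by induction on the distance from $t_0$ in $\T_n$.

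The base case is the definition $\deg(x_j)=\mathbf{e}_j$ for $1\le j\le n$. For the inductive step, let $t$ and $t'$ be joined by an edge labelled $k$, so that $x_{k;t'}$ is obtained from the seed at $t$ through the exchange relation (\ref{exchange relation}). Write $\mathbf{d}_i$ for the degree of the $i$-th variable at $t$, that is $\mathbf{d}_i=\deg(x_{i;t})$ for $i\le n$ (homogeneous by the inductive hypothesis) and $\mathbf{d}_{n+j}=-\mathbf{b}_j$ for the coefficients. The two monomials on the right-hand side of (\ref{exchange relation}) are then homogeneous of degrees $\sum_i[b^t_{ik}]_+\mathbf{d}_i$ and $\sum_i[-b^t_{ik}]_+\mathbf{d}_i$, whose difference equals $\sum_{i=1}^{2n}b^t_{ik}\mathbf{d}_i$ by the identity $[x]_+-[-x]_+=x$. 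Hence the right-hand side is homogeneous, and so is $x_{k;t'}=(\text{right-hand side})/x_{k;t}$, exactly when this difference vanishes.

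Collecting the degrees $\mathbf{d}_1,\ldots,\mathbf{d}_{2n}$ into the columns of an $n\times 2n$ matrix $D_t$, the vanishing required above is the vanishing of the $k$-th column of the product $D_t\tilde B_t$. I would therefore carry in the induction the stronger hypothesis $D_t\tilde B_t=0$. At $t_0$ this holds because the coefficients are principal: $D_{t_0}=[\,I_n\mid -B\,]$ while $\tilde B_{t_0}$ is $B$ stacked over $I_n$, so $D_{t_0}\tilde B_{t_0}=I_n\cdot B+(-B)\cdot I_n=0$. Equivalently, each ``hatted'' coefficient $\hat y_j=x_{n+j}\prod_i x_i^{b_{ij}}$ has degree $-\mathbf{b}_j+\mathbf{b}_j=0$; this degree-zero phenomenon is the structural reason the exchange relations are balanced. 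Granting $D_t\tilde B_t=0$, its $k$-th column vanishes, so $x_{k;t'}$ is homogeneous with $\deg(x_{k;t'})=\sum_i[b^t_{ik}]_+\mathbf{d}_i-\mathbf{d}_k$, and $D_{t'}$ differs from $D_t$ only in column $k$.

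It remains to propagate the hypothesis, i.e. to verify $D_{t'}\tilde B_{t'}=0$. Here I would feed the explicit mutation rule for $\tilde B$ from the seed-mutation definition together with the degree-mutation rule just obtained into the product, and check the vanishing column by column. Since the entries of $\tilde B'$ carry the factors $[b_{ik}b_{kj}]_+$ and $\mathrm{sgn}(b_{ik})$, this reduces to a finite case distinction on the signs of $b^t_{ik}$ and $b^t_{kj}$. I expect this sign bookkeeping to be the only real obstacle, although it is completely elementary; once it is done the induction closes and every cluster variable is homogeneous, which proves the theorem.
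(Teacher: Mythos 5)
Your proposal is correct, but note first that the paper itself contains no proof of this statement: it is imported verbatim from Fomin--Zelevinsky (cited as \cite[Corollary 6.3]{Clusters 4}), so the only meaningful comparison is with the proof in that source. Your argument is essentially a self-contained reconstruction of it. Your strengthened induction hypothesis $D_t\tilde B_t=0$ says precisely that the elements $\hat y_{j;t}=\prod_{i=1}^{2n}x_i^{b^t_{ij}}$ are homogeneous of degree $0$, which is the key lemma in Fomin--Zelevinsky; there it is deduced from the fact that the $\hat y_{j;t}$ transform under mutation by Laurent-monomial formulas, so degree $0$ propagates automatically, whereas you propose to verify the propagation by a direct sign computation. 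That computation, which you left as "sign bookkeeping," does close: for $j=k$ the $k$-th column of $D_{t'}\tilde B_{t'}$ vanishes immediately since $b^t_{kk}=0$ and $b'_{ik}=-b^t_{ik}$, and for $j\neq k$ one uses
\[
\operatorname{sgn}(b^t_{ik})\,[b^t_{ik}b^t_{kj}]_+=
\begin{cases}
[b^t_{ik}]_+\,b^t_{kj} & \text{if } b^t_{kj}\ge 0,\\
[-b^t_{ik}]_+\,b^t_{kj} & \text{if } b^t_{kj}< 0,
\end{cases}
\]
after which, using $[x]_+-[-x]_+=x$, the $j$-th column of $D_{t'}\tilde B_{t'}$ collapses to an integer combination of the $j$-th and $k$-th columns of $D_t\tilde B_t$, hence to $0$. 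So there is no gap in substance. One caveat worth stating explicitly: the theorem as quoted also asserts the containment $\ca(\tilde{B},\mathbf{x})\subseteq\Z[x_1^{\pm 1},\ldots,x_n^{\pm 1};x_{n+1},\ldots,x_{2n}]$, i.e. the Laurent phenomenon; your argument (like the corresponding passage in the cited source) takes this as known and proves only the homogeneity of the cluster variables, which is the actual new content of the statement.
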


\begin{definition}
By iterating the exchange relation (\ref{exchange relation}) we can express every cluster variable $x_{j;t}$ as a (unique) rational function in $x_1,\ldots, x_{2n}$ given by a subtraction-free rational expression; we denote this rational function by $X_{j;t}$. The \emph{$\mathbf{g}$}-vectors of a seed $(\tilde{B}_t , \mathbf{x}_t$ are:
\begin{equation}
\mathbf{g}_{j;t}=\left[
\begin{array}{c}
g_1 \\
\vdots \\
g_n
\end{array}
\right]
=\deg(X_{j;t}),
\end{equation}
we call a matrix of the form $(\mathbf{g}_{1;t}, \ldots , \mathbf{g}_{n;t})$ a $\mathbf{g}$-matrix.
\end{definition}

\section{Farey triples}
\subsection{The Farey sum} In this section we introduce some basic properties of the Farey triples, which are also useful in the study of tubular cluster algebras (see \cite{BG,BGJ}). Let $\Qext = \Q \cup \lbrace \infty \rbrace $ be the totally ordered set of extended rational numbers. 
\begin{notation}
For $q \in \Qext$, we denote by $d(q)$ and $r(q)$ the integers such that:
\begin{itemize}
\item[$\bullet$] $q=\dfrac{d(q)}{r(q)}$,
\item[$\bullet$] $\gcd(d(q),r(q))=1$,
\item[$\bullet$] $r(q)\geq 0$.
\end{itemize}
It is obvious that $d(q)$ and $r(q)$ are uniquely determined. In particular, $0=\frac{0}{1}$ and $\infty=\frac{1}{0}$.
\end{notation}

\begin{definition}
Given $q,q' \in \Qext $ we define
\begin{equation}
\Delta (q,q'):=\left|
\det
\left(
\begin{array}{cc}
d(q) & d(q')\\
r(q) & r(q')
\end{array}
\right)
\right|
\end{equation}
In case that $\Delta(q,q')=1$, we call $q$ and $q'$ \emph{Farey neighbors}. 
\end{definition}

\begin{definition}
Let $q,q' \in \Qext $ be Farey neighbors. The \emph{Farey sum} of $q$ and $q'$ is
\begin{equation}
q \oplus q':= \dfrac{d(q)+d(q')}{r(q)+r(q')},
\end{equation}
and their \emph{Farey difference} is
\begin{equation}
q \ominus q':= \dfrac{d(q)-d(q')}{r(q)-r(q')}.
\end{equation}
Note that both of these operations are commutative.
\end{definition}


\begin{definition}
\label{def: Farey triple}
We call $(q_1,q_2,q_3) \in \Qext^3$ a \emph{triple of neighbors} provided that $\Delta(q_i,q_j)=1$ for $1\leq i < j \leq 3$. A \emph{Farey triple} $[q_1,q_2,q_3]$ is a triple of neighbors considered up to permutation of its components. We use the indices $\lbrace f,s,t \rbrace=\lbrace 1,2,3 \rbrace$ for a Farey triple to indicate that $q_f<q_s<q_t$. 
\end{definition}  

\begin{lemma}
\label{s=f+t}
Let $(q_1,q_2,q_3)\in \Qext^3$ and $q_1<q_2<q_3$. Then $(q_1,q_2,q_3)$ is a triple of neighbors if and only if $q_2=q_1\oplus q_3$, \ie the Farey triples are essentially of the form $[q_f,q_f\oplus q_t,q_t]$. 
\end{lemma}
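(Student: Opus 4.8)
The plan is to pass to numerator/denominator coordinates and reduce everything to $2\times 2$ determinants. Write $d_i := d(q_i)$ and $r_i := r(q_i)$, and set
\[
D_{ij} := d_i r_j - d_j r_i ,
\]
so that $\Delta(q_i,q_j) = |D_{ij}|$. First I would record the elementary fact that the strict order on $\Qext$ is detected by the sign of these determinants: for $q<q'$ in $\Qext$ one has $d(q)r(q') - d(q')r(q) < 0$. For finite rationals with positive denominators this is cross-multiplication, and the case $q'=\infty=\tfrac{1}{0}$ is checked directly (the determinant is then $-r(q)<0$, since $q\neq\infty$). Note also that $q_1<q_3\leq\infty$ forces $q_1\neq\infty$, hence $r_1>0$. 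Applying the sign fact to $q_1<q_2<q_3$ gives $D_{12},\,D_{13},\,D_{23}<0$.

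For the forward implication, assume $(q_1,q_2,q_3)$ is a triple of neighbors, so $|D_{12}|=|D_{13}|=|D_{23}|=1$; combined with the sign information this forces $D_{12}=D_{13}=D_{23}=-1$. The key computation is then the single identity
\[
d_2(r_1+r_3) - r_2(d_1+d_3) = (d_2 r_1 - d_1 r_2) + (d_2 r_3 - d_3 r_2) = -D_{12} + D_{23} = 1 - 1 = 0 .
\]
Since $r_1>0$ we have $r_1+r_3>0$, so this rearranges to $q_2=\tfrac{d_1+d_3}{r_1+r_3}=q_1\oplus q_3$, the Farey sum being defined because $\Delta(q_1,q_3)=1$.

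For the converse, suppose $q_2=q_1\oplus q_3$; by definition of the Farey sum this already presupposes $\Delta(q_1,q_3)=1$, and it asserts the equality of rational numbers $q_2=\tfrac{d_1+d_3}{r_1+r_3}$. To read off the reduced numerator and denominator of $q_2$, I would first check that $\tfrac{d_1+d_3}{r_1+r_3}$ is in lowest terms: any common divisor of $d_1+d_3$ and $r_1+r_3$ divides
\[
d_1(r_1+r_3) - r_1(d_1+d_3) = d_1 r_3 - d_3 r_1 = D_{13},
\]
and since $|D_{13}|=\Delta(q_1,q_3)=1$ it must equal $1$. As $r_1+r_3>0$, uniqueness of the reduced representation yields $d_2=d_1+d_3$ and $r_2=r_1+r_3$. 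Substituting these gives
\[
D_{12}=d_1 r_2 - d_2 r_1 = d_1 r_3 - d_3 r_1 = D_{13}, \qquad D_{23}=d_2 r_3 - d_3 r_2 = d_1 r_3 - d_3 r_1 = D_{13},
\]
so $|D_{12}|=|D_{23}|=|D_{13}|=1$ and $(q_1,q_2,q_3)$ is a triple of neighbors.

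The main obstacle is not the algebra, which collapses to the two displayed identities, but the bookkeeping around it: upgrading the unsigned neighbor conditions $|D_{ij}|=1$ to the signed equalities $D_{ij}=-1$ by means of the ordering, and justifying in the converse that the Farey sum is literally the reduced fraction, so that $d(q_2)$ and $r(q_2)$ may be identified with $d_1+d_3$ and $r_1+r_3$. I would also make sure to treat the point $\infty=\tfrac{1}{0}$ uniformly throughout, which the coordinates $d,r$ make possible. The closing phrase ``essentially of the form $[q_f,q_f\oplus q_t,q_t]$'' is then immediate: relabelling the ordered components as $q_f<q_s<q_t$, the equivalence just proved says precisely that $q_s=q_f\oplus q_t$.
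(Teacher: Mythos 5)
Correct, and essentially the paper's own argument: the paper also works in the coordinates $(d(q_i),r(q_i))$ and uses the two signed neighbor conditions forced by $q_1<q_2<q_3$, packaged as a $2\times 2$ linear system in $(d(q_2),r(q_2))$ whose determinant is $\Delta(q_1,q_3)=1$ and whose unique solution is $(d(q_1)+d(q_3),\,r(q_1)+r(q_3))$. Your write-up merely unpacks that uniqueness argument into direct determinant identities and supplies the details the paper leaves implicit, namely the sign normalization $D_{ij}=-1$ coming from the ordering and the lowest-terms check in the converse.
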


\begin{proof}
Fix $q_1$ and $q_3$ and consider the following system of equations in $d(q_2)$ and $r(q_2)$:
\begin{equation*}
\begin{array}{rl}
d(q_3)r(q_2)-r(q_3)d(q_2) & =1\\
-d(q_1)r(q_2)+r(q_1)d(q_2) & =1,
\end{array}
\end{equation*}
the determinant of the system is $\Delta(q_1,q_3)=1$, so it has a unique solution which is $d(q_2)=d(q_1)+d(q_3)$ and $r(q_2)=r(q_1)+r(q_3)$.
\end{proof}


\begin{lemma and definition}
\label{Farey decomposition}
For all $q \in \Q$, there exist two unique Farey neighbors $q',q''\in \Qext$ such that $q=q'\oplus q''$. We call such expression the Farey decomposition of $q$.
\end{lemma and definition}

\begin{proof}
Consider the diophantine equation $d(q)y-r(q)x=1$. Let $x=a$, $y=b$ be a solution of the equation with $0 \leq b < r(q) $. Put $q'=\frac{a}{b}$ and $q''=q \ominus q'$, then $q=q'\oplus q''$. To see the uniqueness of $q'$ and $q''$, suppose that $q=p \oplus p'$ is another decomposition. Without loss of generality we can assume that $p,q'<q<p'q''$. Since $\Delta(q,p)=\Delta(q,q')=1$ and $p,q'<q$, it follows that $d(q)(r(q')-r(p))=r(q)(d(q')-d(p))$. In particular $q' \neq p$ if and only if $r(q') \neq r(p)$. Suppose that $q' \neq p$. Then we have
\begin{equation*}
\dfrac{d(q)}{r(q)}=\dfrac{d(q')-d(p)}{r(q')-r(p)},
\end{equation*}
but $-r(q)<r(q')-r(p)<r(q)$, a contradiction. Hence $q'=p$. We do the same for $q''=p'$.
\end{proof}

\begin{definition}
Let $[q_f,q_s,q_t]$ be a Farey triple (with $q_f<q_s<q_t$ see \ref{def: Farey triple}) and $k\in \lbrace f,s,t \rbrace$. The \emph{mutation} $\mu_k$ in direction $k$ of $[q_f,q_s,q_t]$ is the Farey triple $\mu_k[q_f,q_s,q_t]$ defined as follows:
\begin{equation}
\mu_k[q_f,q_s,q_t]=\begin{cases}
[q_s,q_s \oplus q_t,q_t] & \text{if  }  k=f \\
[q_f,q_f \ominus q_t,q_t] & \text{if  }  k=s \\
[q_f,q_f \oplus q_s,q_s] & \text{if  }  k=t 
\end{cases}
\end{equation}  
In view of Lemma \ref{Farey decomposition} we have that the mutation is an involution, \ie we have $\mu_k\mu_k[q_f,q_s,q_t]=[q_f,q_s,q_t]$ for each $k$.
\end{definition}

\begin{definition}
The \emph{exchange graph} of the Farey triples is the graph whose vertices are the Farey triples, and whose edges connect triples related by a single mutation.  
\end{definition}

\begin{proposition}
The exchange graph of the Farey triples is a $3$-regular tree  $\T_3$. 
\end{proposition}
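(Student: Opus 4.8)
The plan is to verify the three defining properties of the $3$-regular tree in turn: that the exchange graph is $3$-regular, connected, and acyclic. Since a tree is a connected acyclic graph and the countable $3$-regular tree is unique up to isomorphism, these three facts identify the exchange graph with $\T_3$. The whole argument will be organized around a single complexity function, the sum of denominators
\[
N([q_f,q_s,q_t]) = r(q_f)+r(q_s)+r(q_t) = 2\bigl(r(q_f)+r(q_t)\bigr),
\]
where the last equality uses $r(q_s)=r(q_f)+r(q_t)$, which comes from Lemma \ref{s=f+t}.

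First I would establish $3$-regularity. A triple $T=[q_f,q_s,q_t]$ has exactly the three candidate neighbours $\mu_f T,\ \mu_s T,\ \mu_t T$, so it suffices to show these are pairwise distinct and all different from $T$. The cleanest bookkeeping is which original entries survive: $\mu_f T=[q_s,q_s\oplus q_t,q_t]$ keeps $q_s,q_t$ but not $q_f$; $\mu_t T=[q_f,q_f\oplus q_s,q_s]$ keeps $q_f,q_s$; and $\mu_s T$ keeps $q_f,q_t$ and replaces $q_s$ by $q_f\ominus q_t$. Each inserted entry is either a mediant of two retained entries (hence strictly between them, so different from the entry removed) or the Farey difference $q_f\ominus q_t$, which cannot lie in the open interval $(q_f,q_t)$ — otherwise it would be the mediant $q_s$, forcing $q_f=q_t$. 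Thus every mutation genuinely changes $T$, and the three results are told apart by whether they contain $q_f$ and by whether they contain $q_s$ (the three pairs of answers being distinct), giving degree exactly three.

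Next I would compute the effect of each mutation on $N$, again via $r(q_s)=r(q_f)+r(q_t)$ together with $\Delta(q_f,q_t)=1$. A short calculation gives
\[
N(\mu_f T)=N(T)+2r(q_t),\qquad N(\mu_t T)=N(T)+2r(q_f),\qquad N(\mu_s T)=2\max\bigl(r(q_f),r(q_t)\bigr),
\]
so that $N(\mu_s T)=N(T)-2\min(r(q_f),r(q_t))$; here one uses that $q_f\ominus q_t$ has denominator $|r(q_f)-r(q_t)|$, which is in lowest terms precisely because $\Delta(q_f,q_t)=1$. Hence $\mu_f,\mu_t$ never decrease $N$ and $\mu_s$ never increases it. For connectedness I would iterate $\mu_s$: as long as both $q_f,q_t\neq\infty$ the quantity $\min(r(q_f),r(q_t))$ is positive, so $N$ strictly decreases, and since $N\in\Z_{\geq 2}$ the process terminates. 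It terminates at the triples attaining the minimum $N=2$, which are exactly the $[m,m+1,\infty]$, $m\in\Z$: any triple containing $\infty$ has its other two entries of denominator $1$ because $\Delta(q,\infty)=r(q)$, hence they are consecutive integers. Under $\mu_f$ and $\mu_s$ these triples form a single bi-infinite path, so every triple is joined to $[0,1,\infty]$, proving connectedness.

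The main obstacle is precisely this minimal level: the entry $\infty$ is an artefact of the chosen coordinates, and on the triples containing it the complexity develops ties ($\mu_f$ and $\mu_s$ both preserve $N=2$), so the naive strictly monotone argument degenerates there. I would handle acyclicity by a maximum-complexity argument that isolates this level. Suppose a simple cycle existed and choose a vertex $T$ of maximal $N$ on it. If $N(T)>2$ then neither $q_f$ nor $q_t$ equals $\infty$, so by the formulas above both $\mu_f T$ and $\mu_t T$ have strictly larger $N$ while only $\mu_s T$ is smaller; thus $T$ has a unique neighbour of complexity $\le N(T)$, contradicting that it has two distinct cycle-neighbours of complexity $\le N(T)$. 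Therefore any cycle lies entirely in the level $N=2$; but that level is the bi-infinite path $\{[m,m+1,\infty]\}$, which contains no cycle. Hence the exchange graph is acyclic, and being connected and $3$-regular it is $\T_3$.
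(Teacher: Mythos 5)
Your proof is correct, and while it shares the paper's broad strategy---introduce a complexity on Farey triples, get connectedness by descent and acyclicity by a maximal-complexity argument---the execution is genuinely different. The paper measures complexity by the middle entry alone, $c[q_f,q_s,q_t]=|d(q_s)|+r(q_s)$, descends all the way to the single initial triple $[\tfrac{-1}{1},\tfrac{0}{1},\tfrac{1}{0}]$, and must single out the integer-line triples $[\tfrac{-z-1}{1},\tfrac{-z}{1},\tfrac{1}{0}]$ as the exceptional case where the descending direction is $f$ rather than $s$; it then asserts, without computation, that complexity always changes under mutation and that the descending direction is unique, and derives acyclicity from that. Your sum-of-denominators complexity $N$ instead yields exact formulas for all three mutations, $N(\mu_f T)=N(T)+2r(q_t)$, $N(\mu_t T)=N(T)+2r(q_f)$, $N(\mu_s T)=N(T)-2\min\bigl(r(q_f),r(q_t)\bigr)$, at the price that $N$ develops ties exactly on the $\infty$-containing triples; you compensate by identifying that minimal level $N=2$ as the bi-infinite path $\{[m,m+1,\infty]\}$ and confining any hypothetical cycle to it, where it cannot exist. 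Your version also supplies two points the paper leaves implicit: an explicit proof of $3$-regularity (that the three mutations of a triple are pairwise distinct and distinct from the triple, which the paper never checks but which is needed for the statement), and a precise justification of why a maximal-$N$ vertex on a cycle is contradictory (only one of its three neighbours has complexity $\leq N(T)$). In short, the paper's choice buys brevity and a single terminal vertex for the descent; yours buys transparent monotonicity, an airtight treatment of the degenerate level, and a complete verification of regularity.
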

\begin{proof}
First we show that the graph is connected. It is sufficient to show that any Farey triple is connected with the triple $[\frac{-1}{1}, \frac{0}{1}, \frac{1}{0}]$. Call this triple the \emph{initial triple} and all others \emph{non-initial triples}. For any Farey triple  $[q_f,q_s,q_t]$, define its \emph{complexity} as $c[q_f,q_s,q_t]=|d(q_s)|+r(q_s)$. Since $c[q_f,q_s,q_t]=1$ implies that $[q_f,q_s,q_t]=[\frac{-1}{1}, \frac{0}{1}, \frac{1}{0}]$, it will be sufficient to show that  for any non-initial triple, there is a (unique) direction $k\in \lbrace f,s,t \rbrace$ such that $c(\mu_k[\frac{-1}{1}, \frac{0}{1}, \frac{1}{0}])<c[q_f,q_s,q_t]$. If $c[q_f,q_s,q_t]=[\frac{-z-1}{1}, \frac{-z}{1}, \frac{1}{0}]$ with $z\geq1$, then take $k=f$. In any other case we have either $q_i\geq0$ or $q_i\leq0$  for all $i$, so $k=s$ works.
It only remains to show that the graph does not contain cycles. Since the complexity of a triple always changes under mutation, the graph does not contains loops. Now, notice that if the graph contain an $n$-cycles with $n \geq 2$, then we can find a vertex of the cycle for which the complexity decreases in two different directions, a contradiction.
\end{proof}

\subsection{Some isomorphisms of $\T_3\setminus [\frac{0}{1}, \frac{-1}{1}, \frac{1}{0}]$} In the rest of this note, we write $ \mu_0$, $\mu_{ -1}$ and $ \mu_{\infty} $ to denote the mutations in direction of the fractions of the form $\frac{even}{odd}$, $\frac{odd}{odd}$ and $\frac{odd}{even}$, respectively. We label the edges of $\T_3$ with the same indices, and denote by $\T_3^i$ the connected component of $\T_3 \setminus [\frac{0}{1}, \frac{-1}{1}, \frac{1}{0}]$ having the triple $\mu_i[\frac{0}{1}, \frac{-1}{1}, \frac{1}{0}]$ as a vertex.

\begin{remark}
\label{parity}
From now on, even if the Farey triples are considered up to permutation of their components, when we write a Farey triple, we assume that it is written in the order $[\frac{even}{odd}$, $\frac{odd}{odd}$,$ \frac{odd}{even}]$.  
\end{remark}

\begin{definition}
\label{iso of components}
We define a graph isomorphism $\phi:\T_3^{-1} \rightarrow \T_3^{\infty}$ as follows:
\begin{itemize}
\item[$\bullet$] The triple $[\frac{0}{1}, \frac{1}{1}, \frac{1}{0}]=\mu_{-1}[\frac{0}{1}, \frac{-1}{1}, \frac{1}{0}]$ is mapped to the triple $[\frac{0}{1}, \frac{-1}{1}, \frac{-1}{2}]=\mu_{\infty}[\frac{0}{1}, \frac{-1}{1}, \frac{1}{0}]$, 
\item[$\bullet$] The edges with label $-1,\ 0,\ \infty$ are mapped to the edges with label $\infty,\ -1,\ 0$, resp.
\end{itemize}
Analogously, we define a graph isomorphism $\psi:\T_3^{-1} \rightarrow \T_3^0$ as follows:
\begin{itemize}
\item[$\bullet$] The triple $[\frac{0}{1}, \frac{1}{1}, \frac{1}{0}]$ is mapped to the triple $[\frac{-2}{1}, \frac{-1}{1}, \frac{1}{0}]=\mu_{0}[\frac{0}{1}, \frac{-1}{1}, \frac{1}{0}]$, 
\item[$\bullet$] The edges with label $-1,\ 0,\ \infty$ are mapped to the edges with label $0,\ \infty,\ -1$, resp.
\end{itemize}
\end{definition}

\begin{proposition}
The isomorphisms of \ref{iso of components} can be expressed in terms of the following formulas:
\begin{eqnarray}
\phi[q_{0},q_{-1},q_{\infty}]& =& \left[ \frac{-r(q_{\infty})}{r(q_{\infty})+d(q_{\infty})},\frac{-r(q_0)}{r(q_0)+d(q_0)},\frac{-r(q_{-1})}{r(q_{-1})+d(q_{-1})}  \right],\\
\psi[q_{0},q_{-1},q_{\infty}]&=&\left[ \frac{-(r(q_{-1})+d(q_{-1}))}{d(q_{-1})},\frac{-(r(q_{\infty})+d(q_{\infty}))}{d(q_{\infty})},\frac{-(r(q_{0})+d(q_{0}))}{d(q_{0})}  \right].
\end{eqnarray}
\end{proposition}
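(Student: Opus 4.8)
The plan is to recognise both formulas as the action on Farey triples of a single matrix in $SL_2(\Z)$ acting on $\Qext=\P^1(\Q)$ by Möbius transformations, and then to show that such an action is automatically a label-permuting automorphism of the whole tree $\T_3$. Writing $\binom{d(q)}{r(q)}$ for the primitive vector of $q$, one reads off that $\phi$ applies to each component the transformation $q\mapsto -1/(q+1)$ given by $M_\phi=\left(\begin{smallmatrix}0&-1\\ 1&1\end{smallmatrix}\right)$, since $M_\phi\binom{d(q)}{r(q)}=\binom{-r(q)}{r(q)+d(q)}$ matches the entry $-r(q)/(r(q)+d(q))$; likewise $\psi$ applies $q\mapsto (-q-1)/q$ given by $M_\psi=\left(\begin{smallmatrix}-1&-1\\ 1&0\end{smallmatrix}\right)$. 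Both matrices lie in $SL_2(\Z)$, and in each formula the outputs are then placed in the slots dictated by Remark \ref{parity}; I will show this forced reordering realises exactly the edge-label permutations prescribed in Definition \ref{iso of components}.

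Next I would establish the combinatorial core, which lets me avoid all sign bookkeeping. First, any $M\in SL_2(\Z)$ preserves $\Delta$, because $\Delta(q,q')$ is the absolute value of the determinant of the primitive vectors and $M$ multiplies this determinant by $\det M=1$; hence $M$ preserves the Farey-neighbor relation and sends triples of neighbors to triples of neighbors. Second, two Farey neighbors $q,q'$ have exactly two common Farey-neighbors: writing a candidate as $a\,v(q)+b\,v(q')$ and computing the two defining determinants forces $|a|=|b|=1$, so the common neighbors are precisely $q\oplus q'$ and $q\ominus q'$. Third, inspecting the mutation rule together with Lemma \ref{s=f+t} shows that each $\mu_k$ keeps two entries of a triple and replaces the third by the \emph{other} of these two common neighbors. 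Combining these three points, $M$ carries the common-neighbor pair of $\{q,q'\}$ bijectively onto that of $\{Mq,Mq'\}$, so applying $M$ componentwise intertwines every mutation $\mu_k$ with the mutation in the image direction; this is precisely what makes $\Phi$ (respectively $\Psi$), defined by applying $M_\phi$ (respectively $M_\psi$) and reordering, commute with mutations and hence a graph automorphism of $\T_3$. Bijectivity is clear, the inverse being induced by $M_\phi^{-1}$ (respectively $M_\psi^{-1}$).

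Finally I would pin down the automorphism. A mod-$2$ computation shows $M_\phi$ permutes the parity types $\tfrac{\mathrm{even}}{\mathrm{odd}},\tfrac{\mathrm{odd}}{\mathrm{odd}},\tfrac{\mathrm{odd}}{\mathrm{even}}$ cyclically as $0\mapsto -1\mapsto\infty\mapsto 0$, and $M_\psi$ as $0\mapsto\infty\mapsto -1\mapsto 0$; these are exactly the label permutations of Definition \ref{iso of components}, which simultaneously fixes the slot-reordering in the formulas. A direct evaluation gives $\Phi[\tfrac{0}{1},\tfrac{-1}{1},\tfrac{1}{0}]=[\tfrac{0}{1},\tfrac{-1}{1},\tfrac{1}{0}]=\Psi[\tfrac{0}{1},\tfrac{-1}{1},\tfrac{1}{0}]$, so both automorphisms fix the initial triple; consequently each permutes the three components of $\T_3\setminus[\tfrac{0}{1},\tfrac{-1}{1},\tfrac{1}{0}]$ according to its action on the incident edge labels. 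Since $-1\mapsto\infty$ under $\Phi$ and $-1\mapsto 0$ under $\Psi$, the restriction of $\Phi$ is an isomorphism $\T_3^{-1}\to\T_3^{\infty}$ and that of $\Psi$ an isomorphism $\T_3^{-1}\to\T_3^{0}$, each sending the root to the root prescribed in Definition \ref{iso of components} and inducing the prescribed label permutation. As a label-permuting isomorphism of these edge-labeled trees is uniquely determined by the image of the root together with the label permutation (propagate the map outward edge by edge), I conclude $\Phi|_{\T_3^{-1}}=\phi$ and $\Psi|_{\T_3^{-1}}=\psi$. The main obstacle is the combinatorial core of the second paragraph: recasting mutation as the swap between the two common neighbors is what turns the desired identities into the formal $SL_2(\Z)$-equivariance and avoids the delicate normalisation ($r\geq 0$, $\gcd=1$) that a direct ``$M$ commutes with $\oplus$'' argument would entail.
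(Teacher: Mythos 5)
Your proposal is correct, and it takes a genuinely different route from the paper. The paper's own proof is a direct induction on the complexity $c[q_{0},q_{-1},q_{\infty}]$: the base case is the root $[\frac{0}{1},\frac{1}{1},\frac{1}{0}]$ of $\T_3^{-1}$, and in the inductive step one uses the defining property of $\phi$ (edges labeled $0$ go to edges labeled $-1$, etc.) to write $\phi(\mu_0 T)=\mu_{-1}(\phi T)$, evaluates the right-hand side by the induction hypothesis and the Farey sum, and checks that the result agrees with the claimed formula at $\mu_0 T$; the case of $\psi$ is analogous. You instead identify the two formulas as the componentwise M\"obius action of $M_\phi=\left(\begin{smallmatrix}0&-1\\1&1\end{smallmatrix}\right)$ and $M_\psi=\left(\begin{smallmatrix}-1&-1\\1&0\end{smallmatrix}\right)$ in $SL_2(\Z)$, show that any element of $SL_2(\Z)$ acting componentwise is an automorphism of the exchange tree --- because it preserves $\Delta$, and mutation is exactly the exchange of the two common neighbors $q\oplus q'$ and $q\ominus q'$ of the untouched pair (your use of Lemma \ref{s=f+t} here is the right ingredient) --- and then pin the automorphism down by rigidity of edge-labeled trees, after checking that the initial triple is fixed, that the root of $\T_3^{-1}$ goes to the prescribed root, and that the mod-$2$ parity permutation matches the label permutation of Definition \ref{iso of components}. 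All of these verifications are sound, so the argument is complete. What your route buys: it eliminates the inductive computation and the ordering bookkeeping hidden in the paper's ``without loss of generality''; it proves the existence of the isomorphisms that Definition \ref{iso of components} implicitly postulates, rather than assuming it; and it exposes extra structure, since $M_\psi=M_\phi^{2}$ and $M_\phi^{3}=-I$, so $\phi$ and $\psi$ are restrictions of the single order-$3$ elliptic symmetry of the Farey tessellation under $PSL_2(\Z)$. What the paper's route buys: it is shorter and entirely self-contained, requiring nothing beyond the mutation rule and the Farey sum.
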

\begin{proof}
We prove it only for $\phi$ using induction on $c[q_{0},q_{-1},q_{\infty}]$, for $\psi$ we can proceed in the same way. We see that the formula in Definition \ref{iso of components} for $\phi[\frac{0}{1}, \frac{1}{1}, \frac{1}{0}]$ satisfies the statement, so we take it as base for the induction. Without loos of generality, we assume that $q_0<q_{-1}<q_\infty$. Thus, the complexity of the triple increases only in directions $0$ and $\infty$. We consider only direction $0$, direction $\infty$ is similar. By induction we obtain:
\begin{eqnarray*}
\phi(\mu_0[q_{0},q_{-1},q_{\infty}])&=&\mu_{-1} \left[ \frac{-r(q_{\infty})}{r(q_{\infty})+d(q_{\infty})},\frac{-r(q_0)}{r(q_0)+d(q_0)},\frac{-r(q_{-1})}{r(q_{-1})+d(q_{-1})} \right]\\
&=&\left[\frac{-r(q_{\infty})}{r(q_{\infty})+d(q_{\infty})}, \frac{-(r(q_{\infty})+r(q_{-1}))}{r(q_{\infty})+d(q_{\infty})+r(q_{-1})+d(q_{-1})},\frac{-r(q_{-1})}{r(q_{-1})+d(q_{-1})}  \right].
\end{eqnarray*}
Since $q_{-1}\oplus q_{\infty}=\frac{d(q_{-1})+d(q_{\infty})}{r(q_{-1})+r(q_{\infty})}$, it is clear that the result follows for $\phi$.  
\end{proof}

\section{The exchange matrices of $\ca_M$}

\subsection{Description} In this section, we use the Farey triples in order to give an explicit description of the extended exchange matrices of the Markov cluster algebra (with principal coefficients) $\ca_M$. An important property of the quiver is that the mutation at any vertex only reverses the orientation of the arrows.
\begin{equation*}
\label{quiver}
\begin{array}{c c c}
\xymatrix{
 & -1 \ar@<-0.5ex>[ld] \ar@<0.5ex>[ld] &  \\
0 \ar@<-0.5ex>[rr] \ar@<0.5ex>[rr] & & \infty \ar@<-0.5ex>[lu] \ar@<0.5ex>[lu]
}
&
\xymatrix{
 \ar@<-3ex>[rr]^{\mu_k} & & \ar@<3.1ex>[ll]
 }

&
\xymatrix{
& -1 \ar@<-0.5ex>[dr] \ar@<0.5ex>[dr] &  \\
0 \ar@<-0.5ex>[ur] \ar@<0.5ex>[ur] & & \infty \ar@<-0.5ex>[ll] \ar@<0.5ex>[ll]
}

\end{array}
\end{equation*}
We denote by $B^+$ (resp. $B^-$) the matrix associated with the left hand (resp. right hand) quiver of \ref{quiver}. Then the initial matrix of $\ca_M$ is:
\begin{eqnarray*}
\label{initial matrix}
0 \px \px  -1\ \px  \px  \px \ \infty \px \px \px \\
\downarrow \px  \px  \px \downarrow \px  \px  \px \ \downarrow \px  \px  \px \\
\tilde{B}_0= \left(
\begin{array}{c c c}
0 & -2\phantom{-} & 2 \\
2 & 0 & -2\phantom{-} \\
-2\phantom{-} & 2 & 0 \\
1 & 0 & 0 \\
0 & 1 & 0 \\
0 & 0 & 1
\end{array} \right)
\end{eqnarray*}
We consider the following assignment defined recursively. To the initial Farey triple  $[\frac{0}{1}, \frac{-1}{1}, \frac{1}{0}]$, assign  the initial matrix $\tilde{B}_0$. To any  Farey triple $\mu_{a_n}\dots \mu_{a_1}[\frac{0}{1}, \frac{-1}{1}, \frac{1}{0}]$ with $a_i \in \lbrace 0,-1,\infty \rbrace$, assign the extended exchange matrix $\mu_{a_n}  \dots \mu_{a_1}(\tilde{B}_0)$. Note that this assignment is a surjection onto the set of the exchange matrices of $\ca_M$. We use the following notation to denote the assignment $[q_{0}, q_{-1} ,q_{\infty} ] \longrightarrow M$ (see Remark \ref{parity}):
\begin{eqnarray}
q_{0} \px \px q_{-1} \px \px q_{\infty} \px \px \nonumber \\ 
M= \left(
\begin{array}{c c c}
 &\ B^{\pm}& \\
a \px  & d   & \px g  \\
 b \px  & e   & \px h  \\
 c \px  & f   & \px i  
\end{array} \right)
\end{eqnarray}
Thus we may speak of mutating both, a triple and the associated matrix. Our goal is to describe all the extended exchange matrices using the Farey triples. First we describe the matrices obtained by alternating mutations in direction $-1$ and $0$. 
\begin{equation} 
\label{alternating} 
\begin{tabular}{c c c}
$\left(
\begin{tabular}{c c c}
1 & 0 & 0 \\
0 & 1 & 0 \\
0 & 0 & 1
\end{tabular} 
\right)$&
\xymatrix{
 \ar[rr]^{\mu_{-1}} & & 
 }
&
$\left(
\begin{tabular}{c c c}
1 & 0 & 0 \\
2 & -1 & 0 \\
0 & 0 & 1
\end{tabular} 
\right)$
\\
& &
\xymatrix{
& \ar[d]^{\mu_{0}} \\
&
 }
\\

$\left(
\begin{tabular}{c c c}
3 & -2 & 0 \\
4 & -3 & 0 \\
0 & 0 & 1
\end{tabular}
\right)$
&
\xymatrix{
  & & \ar[ll]_{\mu_{-1}}
 }
&
$\left(
\begin{tabular}{c c c}
-1 & 2 & 0 \\
-2 & 3 & 0 \\
0 & 0 & 1
\end{tabular}
\right)$
\\
\xymatrix{
& \ar[d]^{\mu_{0}} \\
&
 }
 & &\\
$\left(
\begin{tabular}{c c c}
-3 & 4 & 0 \\
-4 & 5 & 0 \\
0 & 0 & 1
\end{tabular}
\right)$ &
\xymatrix{
 \ar[rr]^{\mu_{-1}} & & 
 } 
 &
 $\ldots$
\end{tabular}
\end{equation}
These matrices correspond to triples of the form $[\frac{a}{1},\frac{a\pm1}{1},\frac{1}{0}]$. The next result describes all the matrices associated to triples of that form.
\begin{proposition}
\label{first description}
The matrices obtained by alternating the mutations  $\mu_{-1}$ and $\mu_0$ (beginning with $\mu_{-1}$) are of the following form:\\
\begin{center}
\begin{tabular}{c c c}
$\px \px \frac{a}{1} \px \px \px \frac{a-1}{1} \px \px  \frac{1}{0} \ $ 
&  &
$\px \px \px \frac{a}{1} \px \px \px \px \px \px \frac{a+1}{1} \px \px \px  \frac{1}{0} \px $ \vspace{3mm} \\ 
$\left(
\begin{tabular}{c c c}
 & $B^+\px $&  \vspace{1mm} \\
$1-a$ & $ a $ & $0$ \\
$ -a $ & $a+1$ & $0$ \\
$\phantom{-}0$ & $0$ & $1$ 
\end{tabular}
\right)$
&
$\px \px \px  \text{or} \px \px \px $
&
$\left(
\begin{tabular}{c c c}
 & $B^-\px $&  \vspace{1mm}  \\
$a+1$ & $ -a \px $ & $0$ \\
$ a+2 $ & $-(a+1) \px$ & $0$ \\
$0$ & $0$ & $1$
\end{tabular}
\right)$.
\end{tabular}
\end{center}
\end{proposition}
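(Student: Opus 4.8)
The plan is to argue by induction along the sequence of triples produced by the alternating mutations, tracking only the bottom $3\times 3$ block (the $\mathbf{c}$-vectors) of the extended exchange matrix. The principal part needs no work: since mutation of the Markov quiver merely reverses all arrows, it alternates between $B^+$ and $B^-$, matching the two displayed forms. Write $C^+(a)$ and $C^-(a)$ for the two coefficient blocks appearing in the statement; the goal is to show that, starting from $C^+(0)$ (the identity matrix), the alternation $\mu_{-1},\mu_0,\mu_{-1},\dots$ produces $C^-(0),C^+(2),C^-(2),C^+(4),\dots$.

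Before computing I would identify, for each step, the pivot column and check that no permutation of columns intervenes. For the triple $[\frac{a}{1},\frac{a-1}{1},\frac{1}{0}]$ the $\frac{odd}{odd}$ component $\frac{a-1}{1}$ is the smallest entry $q_f$, so by the defining rule $\mu_f[q_f,q_s,q_t]=[q_s,q_s\oplus q_t,q_t]$ (together with Lemma \ref{s=f+t}) $\mu_{-1}$ carries it to $[\frac{a}{1},\frac{a+1}{1},\frac{1}{0}]$; in the parity order of Remark \ref{parity} this leaves every column in place, the pivot being the second one. Symmetrically, for $[\frac{a}{1},\frac{a+1}{1},\frac{1}{0}]$ the $\frac{even}{odd}$ entry $\frac{a}{1}$ is $q_f$, and $\mu_0$ sends the triple to $[\frac{a+2}{1},\frac{a+1}{1},\frac{1}{0}]$ with pivot the first column and, again, no permutation. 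Thus the induction consists of exactly two transitions: $\mu_{-1}\colon C^+(a)\mapsto C^-(a)$ (pivot $k=2$) and $\mu_0\colon C^-(a)\mapsto C^+(a+2)$ (pivot $k=1$).

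Each transition is then a direct application of $b'_{ij}=b_{ij}+\mathrm{sgn}(b_{ik})[b_{ik}b_{kj}]_+$ to the three coefficient rows, using the principal entries $b_{-1,0}=2$, $b_{-1,\infty}=-2$ of $B^+$ (and their $B^-$ analogues $b_{0,-1}=2$, $b_{0,\infty}=-2$). The base case $a=0$ is read off from (\ref{alternating}). The one point needing care is the $\frac{1}{0}$-column: its entries are governed by products $b_{ik}b_{k,\infty}$ with $b_{k,\infty}=-2$, while the pivot-column entries are $a,a+1$ (resp. $a+1,a+2$). Because $a\geq 0$ on this branch these entries are non-negative, so every such product is $\leq 0$, the bracket $[\,\cdot\,]_+$ vanishes, and the $\frac{1}{0}$-column is preserved as $(0,0,1)$. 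The remaining two columns reduce to the one-line checks $(1-a)+2a=a+1$ and $-a+2(a+1)=a+2$ for $\mu_{-1}$, and the analogous $-a+2(a+1)=a+2$, $-(a+1)+2(a+2)=a+3$ for $\mu_0$, which are precisely the entries of $C^-(a)$ and $C^+(a+2)$.

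The main obstacle is therefore not the arithmetic but the bookkeeping: correctly translating each Farey-mutation direction into a matrix column through the parity convention, and observing that the hypothesis $a\geq 0$ — which the induction propagates, since $a$ only increases — is exactly what keeps the $\frac{1}{0}$-column clean. This last point also explains why the formula is stated only for the branch beginning with $\mu_{-1}$: on the opposite branch the pivot entries turn negative, the cut-off products no longer vanish, and the third column ceases to equal $(0,0,1)$.
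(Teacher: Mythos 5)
Your proposal is correct and follows essentially the same route as the paper's own proof: induction along the alternating mutation sequence, with base case read off from the display (\ref{alternating}) and the inductive step a direct application of the matrix mutation rule, where the pivot-column sign flip and the checks $(1-a)+2a=a+1$, $-a+2(a+1)=a+2$ produce the stated forms. Your additional care with the parity/column bookkeeping and with the sign condition $a\geq 0$ (which is why the $\frac{1}{0}$-column stays $(0,0,1)$) is left implicit in the paper but is the same argument.
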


\begin{proof}
We proceed by induction on $a$. Take as base for the induction the calculus made in (\ref{alternating}). Suppose that $[\frac{a}{1},\frac{a-1}{1},\frac{1}{0}] \longrightarrow M$ satisfies the statement, mutating both in direction $-1$ we obtain:

\begin{equation*}
\begin{tabular}{c l c}
M=
$\left(
\begin{tabular}{c c c}
 & $B^+\px \px \px$& \vspace{1mm}  \\
$-a+1$ & $ a $ & $0$ \\
$ -a $ & $a+1$ & $0$ \\
$\phantom{-}0$ & $0$ & $1$ 
\end{tabular}
\right)$
&
\xymatrix{
& \ar[r]^{\mu_{-1}} & &
 }
 &
  $\left(
\begin{tabular}{c c c}
 & $B^-\px \px\px\px\px\px$& \vspace{1mm}  \\
$-a+1+2a$ & $ -a \px $ & $0$ \\
$ -a+2(a+1) $ & $-(a+1) \px$ & $0$ \\
$0$ & $0$ & $1$ 
\end{tabular}
\right)$
\end{tabular}
\end{equation*}
In this case we are done. The second case is obtained in the same way.
\end{proof}

Notice that the matrices just described are associated to a triple lying in $\T_3^{-1}$. The next theorem describes the rest of the matrices associated to the triples in $\T_3^{-1}$. All other exchange matrices will be obtained from those described in Proposition \ref{first description} and Theorem \ref{second description} below.

\begin{theorem}
\label{second description}
Let $M$ be the exchange matrix associated to a triple $[q_{0},q_{-1}, q_{\infty}]\neq [\frac{a}{1},\frac{a\pm1}{1},\frac{1}{0}]$ lying in $  \T_3^{-1}$. Consider the following six cases:
\begin{itemize}
\item[(i)] $q_0<q_{\infty}<q_{-1}$ 
\vspace{1mm}
\item[(ii)] $q_{-1}<q_{\infty}<q_{0}$
\vspace{1mm}
\item[(iii)] $q_0<q_{-1}<q_{\infty}$\vspace{1mm}
\item[(iv)]  $q_{\infty}<q_{-1}<q_0$\vspace{1mm}
\item[(v)]  $q_{-1}<q_0<q_{\infty}$\vspace{1mm}
\item[(vi)]  $q_{\infty}<q_0<q_{-1}$\vspace{1mm}
\end{itemize}
Let $q_0=\frac{a}{b},\ q_{-1}=\frac{c}{d}$ and $ q_{\infty}=\frac{e}{f}$. Then, in each case, the matrix $M$ is of the form:
\begin{itemize}
\item[(i)] 
$\left(
\begin{tabular}{c c c}
$a+1$ & $ -c+1 $ & $c-a-1$ \\
$ a+b+1 $ & $-c-d+1$ & $c+d-a-b-1$ \\
$b+1$ & $-d+1$ & $d-b-1$ 
\end{tabular}
\right)$
\vspace{4mm}
\item[(ii)] 
$
\left(
\begin{tabular}{c c c}
$-a+1$ & $ c+1 $ & $a-c-1$ \\
$ -a-b+1 $ & $c+d+1$ & $a+b-c-d-1$ \\
$-b+1$ & $d+1$ & $b-d-1$ 
\end{tabular}
\right)
$
\vspace{4mm}
\item[(iii)] 
$
\left(
\begin{tabular}{c c c}
$a+1$ & $ e-a-1 $ & $-e+1$ \\
$ a+b+1 $ & $e+f-a-b-1$ & $-e-f+1$ \\
$b+1$ & $f-b-1$ & $-f+1$ 
\end{tabular}
\right)
$
\vspace{4mm}
\item[(iv)]
$
\left(
\begin{tabular}{c c c}
$-a+1$ & $ a-e-1 $ & $e+1$ \\
$ -a-b+1 $ & $a+b-e-f-1$ & $e+f+1$ \\
$-b+1$ & $b-f-1$ & $f+1$ 
\end{tabular}
\right)
$
\vspace{4mm}
\item[(v)] 
$
\left(
\begin{tabular}{c c c}
$e-c-1$ & $ c+1 $ & $-e+1$ \\
$ e+f-c-d-1 $ & $c+d+1$ & $-e-f+1$ \\
$f-d-1$ & $d+1$ & $-f+1$ 
\end{tabular}
\right)
$
\vspace{4mm}
\item[(vi)] 
$
\left(
\begin{tabular}{c c c}
$c-e-1$ & $ -c+1$ & $e+1$ \\
$ c+d-e-f-1 $ & $-c-d+1$ & $e+f+1$ \\
$d-f-1$ & $-d+1$ & $f+1$ 
\end{tabular}
\right)
$
\end{itemize}
In cases $(i)$, $(iv)$ and $(v)$, the principal part of $M$ is $B^+$, in the other cases it is $B^-$.
\end{theorem}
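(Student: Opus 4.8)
The plan is to induct on the complexity $c[q_0,q_{-1},q_\infty]=|d(q_s)|+r(q_s)$ of the middle component, using the Proposition that the exchange graph is the tree $\T_3$: every triple of $\T_3^{-1}$ that is not one of the spine triples $[\frac a1,\frac{a\pm1}1,\frac10]$ of Proposition \ref{first description} has a unique neighbour of strictly smaller complexity. The base of the induction consists of the triples lying one mutation off the spine, whose matrices I would compute from Proposition \ref{first description} by a single seed mutation: applying $\mu_\infty$ to a spine triple sends $\frac10$ to a finite $\frac{odd}{even}$ fraction (the Farey mediant of the two integer components), and reordering by size lands the result in case (i) or case (ii) according to the parity of $a$. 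Checking the seed-mutation formula against the displayed matrix there gives the first instances of the theorem.

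For the inductive step I would first record the dictionary between the parity labels $0,-1,\infty$ of Remark \ref{parity} and the order labels $f,s,t$ of Definition \ref{def: Farey triple}. Reading off the six orderings shows that the middle component $q_s$ is $q_\infty$ in cases (i),(ii), is $q_{-1}$ in cases (iii),(iv), and is $q_0$ in cases (v),(vi). By Lemma \ref{s=f+t} this middle component is the Farey sum $q_f\oplus q_t$ of the two extremes, so mutating in the middle-parity direction replaces it by the anti-mediant $q_f\ominus q_t$, which falls outside the interval $(q_f,q_t)$; hence one of the old extremes becomes the new middle. Together with the tree structure, this identifies the middle-parity direction as the unique complexity-decreasing mutation. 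Writing $k$ for it and $T'=\mu_k(T)$ for the downward neighbour, involutivity of mutation gives $T=\mu_k(T')$ and $M(T)=\mu_k(M(T'))$, where $M(T')$ is known from Proposition \ref{first description} when $T'$ is a spine triple and from the induction hypothesis otherwise.

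It then remains to check, in each case, that the seed mutation $\mu_k$ carries the matrix attached to $T'$ to the matrix displayed for $T$. The principal part simply toggles $B^+\leftrightarrow B^-$ (the defining property of the Markov quiver, that mutation reverses every arrow), which matches the stated alternation; so the substance is in the coefficient rows $i\in\{4,5,6\}$, which change by $b'_{ij}=b_{ij}+\mathrm{sgn}(b_{ik})[b_{ik}b_{kj}]_+$ with $b_{kj}=\pm2$ an off-diagonal entry of $B^{\pm}$. Since $\mathrm{sgn}(b_{ik})[\,b_{ik}\cdot(\pm2)\,]_+$ collapses to $2[b_{ik}]_+$ or to $2\min(b_{ik},0)$ according to the sign of $b_{kj}$, the update is governed entirely by the signs of the coefficient entries in column $k$. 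Substituting the Farey relation between the parameters of $T$ and $T'$ (the numerator and denominator of the middle component being the sums of those of the two extremes) then reproduces the target formula.

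The main obstacle is precisely this sign bookkeeping: in each of the six orderings one must pin down the signs of the relevant coefficient entries so that the truncations $[\cdot]_+$ resolve correctly, and one must keep the parity-ordered presentation of Remark \ref{parity} consistent through the reordering of the components after each mutation. The effort is roughly halved by the symmetry $q_i\mapsto -q_i$ (negation of all numerators), which interchanges case (i) with (ii), (iii) with (iv), and (v) with (vi) while swapping $B^+$ with $B^-$, so that only three of the six verifications are genuinely independent. Once all the matrices for $\T_3^{-1}$ are settled, the graph isomorphisms $\phi$ and $\psi$ of Definition \ref{iso of components} transport the description to $\T_3^{0}$ and $\T_3^{\infty}$, completing the description of the exchange matrices of $\ca_M$.
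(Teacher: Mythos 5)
Your overall strategy coincides with the paper's: both proofs induct on complexity along the tree $\T_3$, take as base case the matrices obtained from Proposition \ref{first description} by the single mutation $\mu_\infty$ leaving the spine, and reduce the inductive step to checking that the matrix mutation rule carries the formula displayed for one of the six cases into the formula displayed for an adjacent case. The paper organizes this check ``upward'', recording in a hexagonal diagram which case is sent to which under each complexity-increasing mutation and verifying one transition explicitly; you organize it ``downward'' through the unique complexity-decreasing parent. These are the same argument, and your identification of the order-wise middle component with the parity labels in each case, and of mutation at the middle as the unique complexity-decreasing direction, is correct for triples in $\T_3^{-1}$.

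However, the symmetry you invoke to halve the case analysis is false, and as written this is a genuine gap. Negation $q\mapsto -q$ negates numerators but fixes denominators, so it does pair the orderings (i)$\leftrightarrow$(ii), (iii)$\leftrightarrow$(iv), (v)$\leftrightarrow$(vi), but it does not carry the displayed matrices into one another: every entry involving a denominator transforms wrongly (e.g.\ the $(5,1)$-entries $a+b+1$ of case (i) and $-a-b+1$ of case (ii) would require $b\mapsto -b$ as well). Concretely, $T=[\frac{0}{1},\frac{1}{1},\frac{1}{2}]=\mu_\infty[\frac{0}{1},\frac{1}{1},\frac{1}{0}]$ is in case (i) with coefficient rows $(1,0,0)$, $(2,-1,0)$, $(2,0,-1)$, while its negation $[\frac{0}{1},\frac{-1}{1},\frac{-1}{2}]=\mu_\infty[\frac{0}{1},\frac{-1}{1},\frac{1}{0}]$ has, by direct mutation of $\tilde B_0$, coefficient rows $(1,0,0)$, $(0,1,0)$, $(0,2,-1)$: the last two rows disagree. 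Two independent obstructions prevent a repair: first, seed mutation does not commute with the operation ``swap $B^+\leftrightarrow B^-$ keeping the coefficient rows'' (the mutated coefficient entries differ by $|b_{ik}|\,b_{kj}$, which is nonzero whenever $b_{ik}\neq 0$); second, negation does not even preserve $\T_3^{-1}$ --- it sends all of it except the root $[\frac{0}{1},\frac{1}{1},\frac{1}{0}]$ into $\T_3^0$ and $\T_3^\infty$, whose matrices are, by the later proposition on $\phi$ and $\psi$, cyclic permutations of the present formulas rather than the formulas themselves. (A smaller slip: under the normalization of Remark \ref{parity} the integer $a$ of a spine triple is always even, so the base case lands in case (i) or (ii) according to whether the middle entry is $a+1$ or $a-1$, not according to the parity of $a$.) The fix is simply to drop the symmetry and verify every transition of the case hexagon by the sign bookkeeping you describe --- which is exactly what the paper's proof does, carrying out one transition and asserting the others are analogous. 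With that replacement your argument is complete and coincides with the paper's.
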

\begin{proof}
First we show that the matrices obtained by mutating those of Proposition \ref{first description} in direction $\infty$ satisfy the statement:
\begin{center}
\begin{tabular}{c c c}
$\ \px \frac{a}{1} \px \px \px \frac{a-1}{1} \px \px  \frac{1}{0} $ 
&  &
$\px \px \px \frac{a}{1} \px \px \px \px \frac{a-1}{1} \px \px \px  \frac{2a-1}{2} \px \px$ \vspace{3mm} \\ 
$\left(
\begin{tabular}{c c c}
 & $B^+\px $&  \vspace{1mm} \\
$1-a$ & $ a $ & $0$ \\
$ -a $ & $a+1$ & $0$ \\
$\phantom{-}0$ & $0$ & $1$ 
\end{tabular}
\right)$
&
\xymatrix{
& \ar[r]^{\mu_{\infty}} & &
 }
&
$\left(
\begin{tabular}{c c c}
 & $B^-\px $&  \vspace{1mm}  \\
$1-a$ & $ a \px $ & $0$ \\
$ -a $ & $a+1 \px$ & $0$ \\
$0$ & $2$ & $-1 \px$ 
\end{tabular}
\right)$
\end{tabular}
\end{center}
For the matrices obtained by the assignment $[\frac{a}{1},\frac{a+1}{1},\frac{1}{0}] \longrightarrow M$, we do the same. Suppose that $[q_{-1},q_0, q_{\infty}]=\mu_{\infty}[\frac{a}{1},\frac{a\pm1}{1},\frac{1}{0}]$for some $a>0$, we proceed by induction on $c[q_{-1},q_0, q_{\infty}]$. Take as base for the induction the first part of the proof. Now, notice that the cases (for the triples) are related by mutation in the directions described in the following diagram (we only consider directions which increase the complexity):

\begin{equation*}
\xymatrix{
& (i) \ar@<-1ex>[r]_{0} \ar@<-1ex>[ld]_{-1} & (vi) \ar@<-1ex>[l]_{\infty} \ar@<-1ex>[rd]_{-1} & \\
(iii) \ar@<-1ex>[ru]_{\infty} \ar@<-1ex>[rd]_{0} & & & (iv) \ar@<-1ex>[lu]_{0} \ar@<-1ex>[ld]_{\infty}
 \\
& (v) \ar@<-1ex>[lu]_{-1} \ar@<-1ex>[r]_{\infty} & (ii)  \ar@<-1ex>[l]_{0} \ar@<-1ex>[ru]_{-1} &
}
\end{equation*}
Thus, we will be done if we prove that the matrices of the statement satisfies the same diagram. Once again, we only prove one case and in one direction, it will be obvious that all cases are similar. Consider case $(i)$, and mutate in direction 0:
\begin{center}
\begin{tabular}{c}
$\px \px \px \frac{a}{b} \px \px \px \px \px \px \px \px \px \frac{c}{d} \px \px \px \px \px \px \px \px \px \px  \frac{a+c}{b+d} \px \px \px \px $ \vspace{3mm}
\\
$\left(
\begin{tabular}{c c c}
 & $\px \px \px B^+$&  \vspace{1mm}  \\
$a+1$ & $ -c+1 $ & $c-a-1$ \\
$ a+b+1 $ & $-c-d+1$ & $c+d-a-b-1$ \\
$b+1$ & $-d+1$ & $d-b-1$ 
\end{tabular}
\right)$
\\
\xymatrix{
& \ar[d]^{\mu_{0}} \\
&
 }
\vspace{3mm} 
\\
$\ \px \frac{a+2c}{b+2d} \px \px \px \px \px \px \px  \px \frac{c}{d} \px \px \px \px \px \px \px  \px \px \px \px \px \px \px \px  \frac{a+c}{b+d}\px \px \px \px \px \px \px \px \px$ \vspace{3mm}
\\
$\left(
\begin{tabular}{c c c}
 & $\px \px \px \px \px \px \px B^-\px $&  \vspace{1mm}  \\
$-1-a$ & $ -c+1 \px $ & $c-a-1+2(a+1)$ \\
$ -a-b-1 $ & $-c-d+1 \px$ & $c+d-a-b-1+2(a+b+1)$ \\
$-b-1$ & $-d+1$ & $d-b-1+2(b+1)\px$ 
\end{tabular}
\right)$
\end{tabular}
\end{center} 
\end{proof}

We have described all the matrices associated to a triple in $\T_3^{-1}$. In order to describe the rest, we consider the action of the alternating group $A_n=\lbrace id, (123),(132) \rbrace$ on \text{Mat}$(3\times 3,\Z)$ by cyclic permutation of its ``diagonals", \ie if $B=(b_{ij})$ then $\sigma \cdot B=(b_{\sigma(i)\sigma(j)})$.

\begin{proposition}
\label{complete description}
Let $M$ be the matrix associated to a triple $[q_{0},q_{-1}, q_{\infty}]\in \T_3^{-1}$. Then $\phi[q_{0}, q_{-1} ,q_{\infty} ] \longrightarrow (123) \cdot M$ and $\psi[q_{0}, q_{-1} ,q_{\infty} ] \longrightarrow (132) \cdot M$.
\end{proposition}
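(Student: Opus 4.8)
The plan is to induct on the complexity $c[q_0,q_{-1},q_\infty]$, using two equivariance properties of mutation. The first is immediate from the construction: the assignment $T\mapsto M_T$ satisfies $M_{\mu_k T}=\mu_k(M_T)$ for every direction $k\in\{0,-1,\infty\}$, since by definition $M_T$ is obtained from $\tilde{B}_0$ by the same sequence of mutations that produces $T$ from the initial triple. The second, which I would isolate as a lemma, is that the $A_3$-action commutes with mutation: $\sigma\cdot\mu_k(M)=\mu_{\sigma(k)}(\sigma\cdot M)$ for $\sigma\in A_3$, where $\sigma$ is read as a permutation of the direction-labels $0,-1,\infty$ indexing the columns (and simultaneously of the matching coefficient rows). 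This follows from the naturality of the mutation rule $b'_{ij}$ under a relabeling of rows and columns that preserves the partition into mutable and frozen indices; I would check it directly from the formula, noting that $(123)$ and hence $(132)$ fix both $B^+$ and $B^-$, so the principal part transforms correctly.

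Granting these, the base case is the minimal-complexity triple $T_0=[\frac{0}{1},\frac{1}{1},\frac{1}{0}]$ of $\T_3^{-1}$, for which $M_{T_0}=\mu_{-1}(\tilde{B}_0)$. A direct computation of $M_{\phi(T_0)}=\mu_{\infty}(\tilde{B}_0)$ and $M_{\psi(T_0)}=\mu_{0}(\tilde{B}_0)$ confirms the asserted identities $M_{\phi(T_0)}=(123)\cdot M_{T_0}$ and $M_{\psi(T_0)}=(132)\cdot M_{T_0}$, in agreement with Definition \ref{iso of components}.

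For the inductive step, let $T'\in\T_3^{-1}$ have complexity above the minimum, so it has a unique neighbour $T\in\T_3^{-1}$ of smaller complexity, with $T'=\mu_k T$ for the label $k$ of the joining edge. Since $\phi$ is a graph isomorphism carrying an edge of label $k$ to an edge of label $\pi(k)$, where $\pi$ is the label permutation $0\mapsto -1,\ -1\mapsto\infty,\ \infty\mapsto 0$ of Definition \ref{iso of components}, we get $\phi(T')=\mu_{\pi(k)}(\phi(T))$. Combining the two equivariances with the induction hypothesis $M_{\phi(T)}=(123)\cdot M_T$,
\[
M_{\phi(T')}=\mu_{\pi(k)}\big((123)\cdot M_T\big)\quad\text{and}\quad(123)\cdot M_{T'}=\mu_{(123)(k)}\big((123)\cdot M_T\big).
\]
The point is that $\pi$ agrees with $(123)$ as a permutation of $\{0,-1,\infty\}$, so $\pi(k)=(123)(k)$ and the two sides coincide, giving $M_{\phi(T')}=(123)\cdot M_{T'}$. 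The argument for $\psi$ is identical, its label permutation $0\mapsto\infty,\ -1\mapsto 0,\ \infty\mapsto -1$ agreeing with $(132)$.

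The main obstacle I expect is the equivariance lemma together with fixing the precise meaning of the $A_3$-action on a $6\times 3$ matrix: one must say unambiguously which rows and columns are permuted, and by $\sigma$ or $\sigma^{-1}$, so that the base-case computation matches the claimed cycles $(123)$ and $(132)$ rather than their inverses, and so that the frozen rows are permuted compatibly with the mutable ones. Once this convention is pinned down and the lemma checked from the mutation formula, the induction is pure bookkeeping carried by the tree structure of $\T_3^{-1}$.
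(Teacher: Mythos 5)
Your proof is correct and is essentially the paper's own argument made explicit: the paper likewise reduces everything to the base case at $\mu_{-1}[\frac{0}{1},\frac{-1}{1},\frac{1}{0}]$ by invoking the $A_3$-invariance of $B^{\pm}$ and of the identity matrix together with the fact that $\phi$ and $\psi$ permute edge labels exactly as $(123)$ and $(132)$ do, leaving your commutation lemma $\sigma\cdot\mu_k(M)=\mu_{\sigma(k)}(\sigma\cdot M)$ and the induction on complexity implicit. Your closing caution about the $\sigma$-versus-$\sigma^{-1}$ convention is genuinely needed: under the paper's literal definition $\sigma\cdot B=(b_{\sigma(i)\sigma(j)})$ one computes $\mu_{\infty}(\tilde{B}_0)=(132)\cdot\mu_{-1}(\tilde{B}_0)$ and $\mu_{0}(\tilde{B}_0)=(123)\cdot\mu_{-1}(\tilde{B}_0)$, so the two cycles in the statement are interchanged unless one adopts the inverse (position-moving) convention, which is exactly the one under which your equivariance lemma and base case are mutually consistent.
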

\begin{proof}
Since the matrices $B^{\pm}$ and the $3\times 3$ identity matrix are invariant under the action of $(123)$ and $(132)$, and the functions $\phi$ and $\psi$ preserve the mutation in the sense of \ref{iso of components}, it is sufficient to verify the statement for the matrices associated to $\phi[\frac{0}{1}, \frac{-1}{1}, \frac{1}{0}]$ and $\psi[\frac{0}{1}, \frac{-1}{1}, \frac{1}{0}]$, but this follows at once.
\end{proof}
\begin{observation}
Each $\mathbf{c}$-vector of $\ca_M$ has either only non negative entries or only non positive entries. The corresponding fact is known for each skew-symmetric cluster algebra of geometric type, \cf section 1 of \cite{Nakanishi Zelevinsky}.
\end{observation}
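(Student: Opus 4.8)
The plan is to deduce the statement directly from the explicit description of the coefficient parts obtained above, after reducing the problem from all of $\T_3$ to the single component $\T_3^{-1}$. Recall that the $\mathbf{c}$-vectors attached to a seed are precisely the three columns of the bottom $3\times 3$ block of its extended exchange matrix, so the assertion is that each of these columns has all of its entries $\ge 0$ or all of its entries $\le 0$. Now every exchange matrix of $\ca_M$ is either the initial matrix $\tilde B_0$, whose coefficient part is the identity and hence trivially sign-coherent, or is associated to a triple of $\T_3^{-1}$, or is the image of such a matrix under $(123)$ or $(132)$: indeed $\T_3\setminus\{[\frac01,\frac{-1}1,\frac10]\}=\T_3^0\sqcup\T_3^{-1}\sqcup\T_3^\infty$, and by Proposition \ref{complete description} these two permutations carry the matrices of $\T_3^{-1}$ onto those of $\T_3^0$ and $\T_3^\infty$. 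Since the action $\sigma\cdot M=(b_{\sigma(i)\sigma(j)})$ sends the $j$-th column of the coefficient part to a reordering of its $\sigma(j)$-th column, it merely permutes the entries within each column and therefore preserves the property of having all entries of one sign. Hence it suffices to treat the matrices attached to the triples of $\T_3^{-1}$.

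The arithmetic fact I would isolate first is that every triple of $\T_3^{-1}$ has all three of its components $\ge 0$ (with $\infty$ counted as non-negative); equivalently, writing $q_0=\frac ab$, $q_{-1}=\frac cd$, $q_\infty=\frac ef$ as in Remark \ref{parity}, all the integers $a,b,c,d,e,f$ are $\ge 0$, the odd denominators $b,d$ being in fact $\ge 1$. This holds for the base triple $[\frac01,\frac11,\frac10]$ and is preserved by the mutations that remain inside the subtree $\T_3^{-1}$, since the Farey sum $\oplus$ of two non-negative neighbors is again non-negative and one never re-crosses the edge $\{0,\infty\}$ back to the initial triangle; geometrically, $\T_3^{-1}$ is exactly the set of triangles of the Farey tessellation lying on the non-negative side of the geodesic joining $0$ and $\infty$.

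With this in hand the verification becomes a direct inspection of Proposition \ref{first description} and Theorem \ref{second description}. For the matrices of Proposition \ref{first description} two of the three columns end in $0$ and the remaining entries are sign-coherent because $a\ge 0$. For the six cases of Theorem \ref{second description} the two ``outer'' columns are immediate from $a,b\ge 0$ together with $c,d\ge 1$ (or $e,f\ge 0$); the content lies in the ``difference'' column. Taking case (i) as a model, that column equals $(c-a-1,\,c+d-a-b-1,\,d-b-1)$, which with $u=c-a$ and $v=d-b$ reads $(u-1,\,u+v-1,\,v-1)$. The Farey-neighbor relation $\Delta(q_0,q_{-1})=1$, i.e. $bc-ad=1$, rewrites as $bu-av=1$; since $a\ge 0$ and $b\ge 1$, this identity is incompatible with either of the two mixed-sign patterns $u\ge 2,\ v\le 0$ (which forces $bu-av\ge 2$) and $u\le 0,\ v\ge 2$ (which forces $bu-av\le 0$). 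Thus the column is sign-coherent. The remaining five cases, and the analogous difference columns occurring in them, are handled by the same substitution together with the relevant one among the three neighbor relations $bc-ad=\pm1$, $af-be=\pm1$, $cf-de=\pm1$.

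The one genuinely delicate point, which I expect to be the main obstacle, is the uniform bookkeeping: making the non-negativity statement for $\T_3^{-1}$ precise, and then checking that in each of the six orderings it is indeed the correct neighbor relation that excludes the bad sign patterns, since the cases are only superficially symmetric. Finally, it is worth recording that the statement is in any case subsumed by a general theorem: the principal part of every exchange matrix is the skew-symmetric matrix $B^{\pm}$, so $\ca_M$ is a skew-symmetric cluster algebra of geometric type, for which sign-coherence of the $\mathbf{c}$-vectors is known (see \cite{Nakanishi Zelevinsky}). This yields a computation-free alternative, while the argument above exhibits the property as an immediate feature of the Farey-triple description.
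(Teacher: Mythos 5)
Your argument is correct and is in essence the paper's own: the paper gives no written proof of this Observation, presenting sign-coherence as something to be read off from the explicit matrix descriptions of Section 3 (Proposition \ref{first description}, Theorem \ref{second description}, and the permutation action of Proposition \ref{complete description}), with the Nakanishi--Zelevinsky citation as the general alternative --- precisely the two routes you describe. The extra bookkeeping you supply (reduction to $\T_3^{-1}$, non-negativity of the components of its triples, and the determinant relation $bu-av=1$ ruling out mixed signs in the difference columns) simply makes that inspection rigorous, and your checks are sound.
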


In the same way as we did with the exchange matrices, we assign to each Farey Triple a $\mathbf{g}$-matrix.

\begin{theorem}
Let $M$ be the $\mathbf{g}$-matrix associated to a triple $[q_{0},q_{-1}, q_{\infty}]\in \T^{-1}_{3}$. Consider the following three cases:
\begin{itemize}
\item[(i)] $[q_{0},q_{-1}, q_{\infty}]=[\frac{a}{1},\frac{a-1}{1},\frac{1}{0}]$
\vspace{1mm}
\item[(ii)] $[q_{0},q_{-1}, q_{\infty}]=[\frac{a}{1},\frac{a+1}{1},\frac{1}{0}]$
\vspace{1mm}
\item[(iii)] $q_{0}=\frac{a}{b},\ q_{-1}=\frac{c}{d}$ and $ q_{\infty}=\frac{e}{f}\neq\frac{1}{0}$
\end{itemize}
Then, in each case, the matrix $M$ is of the form:
\begin{itemize}
\item[(i)] 
$\left(
\begin{array}{c c c}
a+1 & a & 0\\
-a & -a+1 & 0 \\
0 & 0 & 1
\end{array} \right)$
\vspace{2mm} 
\item[(ii)]
$\left(
\begin{array}{c c c}
a+1 & a+2 & 0\\
-a & -(a+1) & 0 \\
0 & 0 & 1
\end{array} \right)$
\vspace{2mm}
\item[(iii)]
$\left(
\begin{array}{c c c}
a+1 & c+1 & e+1\\
b-a-1 & d-c-1 & f-e-1 \\
1-b & 1-d & 1-f
\end{array} \right)$
\end{itemize}
\end{theorem}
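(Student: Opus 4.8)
The plan is to read off the $\mathbf{g}$-matrices from the $\mathbf{c}$-matrices, which have already been determined. Recall that the $\mathbf{c}$-vectors of a seed are exactly the columns of the bottom $n\times n$ block of its extended exchange matrix; consequently, for every triple in $\T_3^{-1}$ the matrix $C$ whose columns are the $\mathbf{c}$-vectors is given explicitly by the coefficient parts displayed in Proposition \ref{first description} (for the triples $[\frac{a}{1},\frac{a\pm1}{1},\frac{1}{0}]$) and in Theorem \ref{second description} (for the triples with $q_{\infty}\neq\frac{1}{0}$). Since $\ca_M$ is skew-symmetric with principal coefficients, the tropical duality of \cite{Nakanishi} and \cite[Theorem 1.2]{Nakanishi Zelevinsky} applies and relates the $\mathbf{g}$-matrix $G$ and the $\mathbf{c}$-matrix $C$ at each vertex by $C^{T}G=I$, equivalently $G=(C^{-1})^{T}$. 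Thus it suffices to verify, in each of the three cases, that the matrix $G$ in the statement satisfies $C^{T}G=I$ for the corresponding $C$; by uniqueness of the inverse this identifies $G$ as the $\mathbf{g}$-matrix. Observe that the principal part $B^{\pm}$ never enters this computation.

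For cases (i) and (ii) the triple is $[\frac{a}{1},\frac{a\pm1}{1},\frac{1}{0}]$, and by Proposition \ref{first description} the matrix $C$ is block diagonal: a $2\times2$ block of determinant $1$ together with the entry $1$ coming from the $\frac{1}{0}$ direction. The identity $C^{T}G=I$ then reduces to multiplying the two $2\times2$ blocks, which is immediate, while the $\frac{1}{0}$ column remains $(0,0,1)^{T}$.

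In case (iii) we have $q_{\infty}=\frac{e}{f}\neq\frac{1}{0}$, and the relevant $C$ is one of the six matrices of Theorem \ref{second description}, according to the size-ordering of $q_0,q_{-1},q_{\infty}$. The key point is that the proposed $G$ has a uniform description: written in the order $q_0,q_{-1},q_{\infty}$ of Remark \ref{parity}, its column attached to a fraction $p/q$ is $(p+1, q-p-1, 1-q)^{T}$, depending on that fraction alone. I would verify $C^{T}G=I$ for one ordering, say $q_0<q_{-1}<q_{\infty}$, using Lemma \ref{s=f+t}: since the middle fraction is the Farey sum (or difference) of the two outer ones, one has a relation such as $c=a+e$, $d=b+f$, and feeding these into the products makes $C^{T}G$ collapse to the identity. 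The remaining five orderings are handled by the same bookkeeping.

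The step I expect to be the main obstacle is precisely this uniformity in case (iii): the six matrices of Theorem \ref{second description} look quite different and carry different principal parts, yet each must be dual to the single per-fraction formula $(p+1, q-p-1, 1-q)^{T}$. The care needed is in matching the columns of $C$ (always ordered $q_0,q_{-1},q_{\infty}$) with the size-ordering $q_f<q_s<q_t$ used in Theorem \ref{second description}, and in invoking the correct instance of the Farey relation from Lemma \ref{s=f+t} in each of the six configurations so that the product $C^{T}G$ simplifies to $I$. The rest is a short and routine verification.
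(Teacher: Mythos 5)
Your proposal is correct and takes essentially the same route as the paper: both invoke the tropical duality of \cite{Nakanishi} and \cite[Theorem 1.2]{Nakanishi Zelevinsky} to get $G=(C^{-1})^{T}$ and then read off $G$ from the coefficient parts already computed in Proposition \ref{first description} and Theorem \ref{second description}. The only difference is organizational: the paper condenses the six matrices of Theorem \ref{second description} into a single matrix (implicitly using the same Farey sum and determinant relations you invoke) and inverts it once, whereas you verify $C^{T}G=I$ separately in each ordering; both amount to the same routine computation once the duality is in place.
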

\begin{proof}
In view of \cite[Theorem 1.2]{Nakanishi Zelevinsky}, we can calculate $M$ by inverting and transposing the complementary part of the exchange matrices of Proposition \ref{first description} and Theorem \ref{second description}. We can easily see that for the cases $(i)$ and $(ii)$ we are done. Now assume that $[q_{0},q_{-1}, q_{\infty}]$ is as in case (iii). Notice that all the matrices of Theorem \ref{second description} can be expressed in terms of a single matrix, namely
\begin{equation*}
\lambda \left(
\begin{array}{c c c}
e-c+cf-de & a-e+bf-af & c-a+ad-bc \\
e+f-c-d+cf-de & a+b-e-f+bf-af & c+d-a-b+ad-bc \\
f-d+cf-de & b-f+bf-af & d-b+ad-bc
\end{array} \right),
\end{equation*}
where $\lambda=(cf-de+be-af+ad-bc)^{-1}$. An straightforward calculation shows that the inverse of such matrix is
\begin{equation*}
\left(
\begin{array}{c c c}
a+1 & b-a-1 & 1-b\\
c+1 & d-c-1 & 1-d \\
e+1 & f-e-1 & 1-f
\end{array} \right).
\end{equation*}
\end{proof}

\begin{proposition}
\label{complete description}
Let $M$ be the $\mathbf{g}$-matrix associated to a triple $[q_{0},q_{-1}, q_{\infty}]\in \T_3^{-1}$. Then $\phi[q_{0}, q_{-1} ,q_{\infty} ] \longrightarrow (123) \cdot M$ and $\psi[q_{0}, q_{-1} ,q_{\infty} ] \longrightarrow (132) \cdot M$.
\end{proposition}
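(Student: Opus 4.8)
The plan is to deduce this from the analogous statement already established for the extended exchange matrices, together with the description of the $\mathbf{g}$-matrices obtained in the previous theorem. Recall from that theorem (which rests on \cite[Theorem 1.2]{Nakanishi Zelevinsky}) that the $\mathbf{g}$-matrix $G$ attached to a triple is the inverse transpose $G=(C^{-1})^{T}$ of the complementary part $C$ (the bottom $3\times 3$ block, \ie the $\mathbf{c}$-matrix) of the associated exchange matrix; here $C$ is invertible since it has determinant $\pm 1$. So it suffices to show that passing from $C$ to $G$ intertwines the $A_n$-action with itself.

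First I would make the $A_n$-action explicit as a conjugation. Writing $P_\sigma$ for the permutation matrix with $P_\sigma\mathbf{e}_j=\mathbf{e}_{\sigma(j)}$, a direct index computation gives $(P_\sigma^{-1}AP_\sigma)_{ij}=a_{\sigma(i)\sigma(j)}$, so that $\sigma\cdot A=P_\sigma^{-1}AP_\sigma$ for every $A\in\text{Mat}(3\times 3,\Z)$. Since permutation matrices are orthogonal we have $P_\sigma^{T}=P_\sigma^{-1}$, and this is exactly what makes inverse transpose commute with the action: for invertible $A$,
\[
\begin{aligned}
\bigl((\sigma\cdot A)^{-1}\bigr)^{T}
&=\bigl((P_\sigma^{-1}AP_\sigma)^{-1}\bigr)^{T}
=(P_\sigma^{-1}A^{-1}P_\sigma)^{T}\\
&=P_\sigma^{T}(A^{-1})^{T}(P_\sigma^{-1})^{T}
=P_\sigma^{-1}(A^{-1})^{T}P_\sigma
=\sigma\cdot\bigl((A^{-1})^{T}\bigr).
\end{aligned}
\]

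Finally I would invoke the exchange-matrix version of this proposition: under $\phi$ the exchange matrix $M$ is sent to $(123)\cdot M$, hence its complementary part $C$ is sent to $(123)\cdot C$. Applying the identity above with $\sigma=(123)$ and $A=C$ shows that $G=(C^{-1})^{T}$ is sent to $(123)\cdot G$; the case of $\psi$ and $(132)$ is identical. I do not anticipate a genuine obstacle here: the entire content is the displayed commutation, which relies only on $P_\sigma^{T}=P_\sigma^{-1}$. As an alternative one could copy the proof of the exchange-matrix statement verbatim, checking the assertion on the base triples $\phi[\frac{0}{1},\frac{-1}{1},\frac{1}{0}]$ and $\psi[\frac{0}{1},\frac{-1}{1},\frac{1}{0}]$ and using that $\phi,\psi$ preserve mutation in the sense of \ref{iso of components}; but this is less clean, since the $\mathbf{g}$-matrices do not mutate as simply as the exchange matrices, so routing through the inverse transpose is preferable.
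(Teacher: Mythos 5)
Your proposal is correct and takes essentially the same route as the paper: the paper's entire proof is the one-line observation that inverting and transposing a matrix commute with the $A_3$-action, applied (implicitly) to the already-proved exchange-matrix version of the proposition together with the fact that the $\mathbf{g}$-matrix is the inverse transpose of the complementary part. Your conjugation computation $\sigma\cdot A=P_\sigma^{-1}AP_\sigma$ with $P_\sigma^{T}=P_\sigma^{-1}$ simply makes that one-line claim explicit, which is a welcome (and needed) elaboration rather than a different argument.
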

\begin{proof}
The operations of transposing and inverting a matrix commute with the action $A_3$. 
\end{proof}
We notice that the $\mathbf{g}$-vectors of $\ca_M$ lie in the plane $x+y+z=1$. The first vectors obtained by iterated mutation of the initial seed are represented in the Figure \ref{g vectors}. Another interpretation of this figure is given in \cite{Fock Goncharov}.
\begin{figure}[htbp]
\label{g vectors}
\includegraphics[scale=0.7]{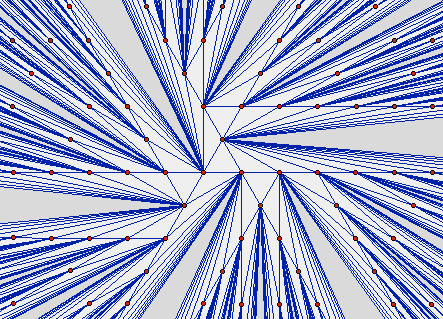}
\caption{The $\mathbf{g}$-vectors of $\ca_M$}
\end{figure}

\end{document}